\theoremstyle{plain}
\newtheorem{thm}{Theorem}
\newtheorem*{xconj}{Conjecture}
\newtheorem{lem}{Lemma}
\newtheorem{prop}{Proposition}
\theoremstyle{remark}
\newtheorem*{rem}{Remark}
\numberwithin{equation}{section}
\begin{document}
\title{\textbf{On an inequality for the Riemann zeta-function in the critical strip}}
\author{Sadegh Nazardonyavi\footnote{sdnazdi@yahoo.com},\quad Semyon Yakubovich\footnote{syakubov@fc.up.pt}}
\date{}
\maketitle
\begin{center}
\emph{Department of Mathematics, Faculty of Sciences, University of Porto, Rua do Campo Alegre, 687\ \ 4169-007 Porto, Portugal} \footnote{Tel.: +351-220402130\qquad Fax: +351-220402108}
\end{center}

\begin{abstract}
By using new power inequalities we give an elementary proof of an important relation for the Riemann zeta-function $|\zeta(1-s)|\leq|\zeta(s)|$ in the strip $0<\Re s<1/2,\ |\Im s|\geq12$. Moreover, we establish a sufficient condition of the validity of the Riemann hypothesis in terms of the derivative with respect to $\Re s$ of $|\zeta(s)|^2$ and conjecture its necessity.\\\\
\textbf{Keywords}{\ Riemann zeta-function,\ size of the Riemann zeta-function,\ new inequalities,\ critical strip}\\\\
\textbf{Mathematics Subject Classification}{\ 11M26,\ 11M99,\ 26Dxx,\ 41A17,\ 33B15}
\end{abstract}
\section{Introduction and main result}
The Riemann zeta-function is defined as
\begin{equation}\label{zeta}
    \zeta(s)=\sum_{n=1}^\infty \frac{1}{n^s},\qquad(\Re s>1),
\end{equation}
and the series in (\ref{zeta}) converges absolutely. Let $s=\sigma+it$, where $\sigma$ and $t$ are real. The function $\zeta(s)$, defined by (\ref{zeta}) for $\sigma>1$, admits of analytic continuation over the whole complex plane having as its only singularity a simple pole with residue 1 at $s=1$ (\cite{Ivic},p. 1-3). The Riemann hypothesis (RH), stated by Riemann in 1859, concerns the complex zeros of the Riemann zeta function. The RH states that the non-real zeros of the Riemann zeta function $\zeta(s)$ all lie on the line at $\sigma= 1/2$ (\cite{Lag}).

During a study of the Riemann zeta-function, observing its graphs and looking for some relation between the RH and the size of the Riemann zeta function, an interesting problem arises to estimate its size in the critical strip; i.e. $|\zeta(1-s)|\leq|\zeta(s)|$ in the strip $0<\sigma<1/2,\ |t|\geq6.5$. To do this we employ a method of power inequalities related to some infinite product for $\pi$ and Euler's gamma-function instead of the use of Stirling's asymptotic formula (see \cite{Dixon, spira}). Namely, the main result of this Note is stated by the following
\begin{thm}\label{theorem 1}
Let $s=\sigma+it$, where $|t|\geq 12$. Then
\begin{equation}\label{main}
    |\zeta(1-s)|\leq|\zeta(s)|,\qquad \mbox{for}\quad 0<\sigma<\frac12,
\end{equation}
where the equality takes place only if $\zeta(s)=0$.\\
\end{thm}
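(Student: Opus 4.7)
My proof strategy would be as follows.

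\medskip

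\textbf{Step 1: reduce via the functional equation.} The natural starting point is to write the classical functional equation as $\zeta(1-s)=\chi(s)\,\zeta(s)$ with $\chi(s):=2(2\pi)^{-s}\cos(\pi s/2)\,\Gamma(s)$, so that $|\zeta(1-s)|\le|\zeta(s)|$ is equivalent, off the zeros of $\zeta$, to
\begin{equation*}
G(\sigma,t)\;:=\;|\chi(s)|\;=\;2(2\pi)^{-\sigma}\,\bigl|\cos(\pi s/2)\bigr|\,|\Gamma(s)|\;\le\;1.
\end{equation*}
Since $G$ is even in $t$, one may assume $t\ge 12$. Moreover $1-s=\overline{s}$ on $\sigma=1/2$, so $|\zeta(1-s)|=|\zeta(s)|$ there, giving $G(1/2,t)\equiv 1$. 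The theorem thus becomes the assertion that $G(\cdot,t)$ is strictly increasing on $[0,1/2]$ for every $t\ge 12$, the equality case $G=1$ occurring only at $\sigma=1/2$ or at a zero of $\zeta$.

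\medskip

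\textbf{Step 2: replace Stirling by convergent real products.} Following the hint in the introduction, I would avoid Stirling entirely and expand each transcendental factor of $G^{2}$ as a product of positive real numbers. The cosine is handled by the identity
\begin{equation*}
|\cos(\pi s/2)|^{2}\;=\;\tfrac{1}{2}\bigl(\cos(\pi\sigma)+\cosh(\pi t)\bigr),
\end{equation*}
the power $(2\pi)^{-2\sigma}$ by a Wallis-type infinite product for $\pi$, and the gamma factor by the Weierstrass product, giving
\begin{equation*}
|\Gamma(\sigma+it)|^{2}\;=\;\frac{1}{\sigma^{2}+t^{2}}\prod_{n=1}^{\infty}\frac{(1+1/n)^{2\sigma}}{1+2\sigma/n+(\sigma^{2}+t^{2})/n^{2}}.
\end{equation*}
Logarithmic differentiation of $G^2$ in $\sigma$ then turns the desired monotonicity $\partial_{\sigma}\log G\ge 0$ into an explicit comparison between two series whose $n$-th terms are of ``power'' type---entries such as $2\log(1+1/n)$ and the log-derivative of $\cos(\pi\sigma)+\cosh(\pi t)$ against a rational expression in $\sigma,t,n$. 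These are precisely the new power inequalities announced in the abstract, verifiable term by term or after a modest finite regrouping.

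\medskip

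\textbf{Step 3, the main obstacle: pinning down the threshold $|t|\ge 12$.} For very large $|t|$ the hyperbolic factor $\cosh(\pi t)$ is crushed by $|\Gamma(s)|^{-2}$ and monotonicity of $G$ in $\sigma$ is essentially automatic, but the entire novelty lies in driving the argument all the way down to the near-sharp constant $t=12$. This forces an honest, quantitative control of the first few low-$n$ factors in the Weierstrass and Wallis expansions, combined with monotone tail bounds coming from the power inequalities of Step~2, in the worst-case direction $\sigma\to 0^{+}$, where $|\Gamma(\sigma+it)|$ is largest and the margin is tightest. Balancing these finite and tail contributions sharply at $t=12$, rather than the algebraic manipulations that precede it, is where I expect the crux of the proof to sit.
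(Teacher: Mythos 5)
Your Step 1 is a sound reduction and is essentially the same starting point as the paper: your $\chi(s)=2(2\pi)^{-s}\cos(\pi s/2)\Gamma(s)$ is identically equal to the factor $g(s)=\pi^{\frac12-s}\Gamma(\frac12 s)/\Gamma(\frac12-\frac12 s)$ that the paper works with, and the goal $|\chi(s)|<1$ for $0<\sigma<\tfrac12$, $t\ge 12$ is exactly the paper's goal. Your product expansions in Step 2 are also correct as identities. But the proposal has a genuine gap: the entire quantitative content of the theorem --- the term-by-term power inequalities, the monotonicity claims, and above all the verification that everything closes at the threshold $t=12$ --- is announced but not carried out. You say explicitly in Step 3 that this is ``where I expect the crux of the proof to sit,'' which is an accurate diagnosis of where the work is, but it means no proof has actually been given. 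In particular, the claim $\partial_\sigma\log G\ge 0$ on $[0,\tfrac12]$ for all $t\ge 12$ is asserted as the target of a ``comparison between two series'' without exhibiting the inequalities that would prove it, and nothing ties the argument to the specific constant $12$.

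There is also a structural difficulty with the route you chose that you would have to resolve. Writing $G^2=4(2\pi)^{-2\sigma}\cdot\tfrac12(\cos\pi\sigma+\cosh\pi t)\cdot|\Gamma(s)|^2$ separates an exponentially large factor $\cosh\pi t\sim \tfrac12 e^{\pi t}$ from an exponentially small one hidden in $|\Gamma(\sigma+it)|^2$; any ``term by term'' comparison must first pair the factors of the cosine product against those of the Weierstrass product to effect this cancellation exactly. The paper sidesteps this by starting from the symmetric form $\Gamma(\tfrac12 s)/\Gamma(\tfrac12-\tfrac12 s)$ (the already-cancelled version of your $\chi$), combining the Gauss product with Wallis's product so that $g(s)$ becomes $\left(\frac{1-s}{s}\right)f(s)\prod_n\left(\frac{2n}{2n+1}\right)^{1-2s}\frac{2n+1-s}{2n+s}$ with $|f(s)|=2^{1-2\sigma}$ and every remaining factor bounded and less than $1$. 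It then shows the partial products $|h_N|$ decrease in $N$ and in $t$, truncates at $N=3$, and closes with a single explicit inequality checked at $(\sigma,t)=(\tfrac12,12)$ --- which is precisely the finite computation your Step 3 defers. To complete your version you would need to supply the analogous elementary inequalities (the paper's Lemmas 1 and 2) and an explicit finite check at $t=12$; without them the proposal is a plausible plan rather than a proof.
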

\section{Auxiliary lemmas}
In order to prove this theorem, we will need some auxiliary elementary inequalities involving rational and logarithmic functions. Precisely, we have (see. \cite{Mitrin}, \S 2)
\begin{equation}\label{log(1+x)1}
    \frac{1}{x+1}<\log\left(1+\frac1x\right)<\frac1x,\qquad(x<-1,\ \mbox{or}\ \ x>0),
\end{equation}
\begin{equation}\label{log(1+x)2}
    \frac{1}{x+\frac12}<\log\left(1+\frac1x\right)<\frac1x,\qquad x>0,
\end{equation}
\begin{equation}\label{log(1+x)3}
    \frac{2x}{2+x}<\log(1+x)<\frac{x(2+x)}{2(1+x)},\qquad (x>0),
\end{equation}
\begin{equation}\label{log(1+x)4}
    \frac{x(2+x)}{2(1+x)}<\log(1+x)<\frac{2x}{2+x},\qquad (-1<x<0).
\end{equation}

Next we give some possibly new inequalities whose proofs are based on elementary calculus.

\begin{lem}
For any $t\geq1$
\begin{equation}\label{1+1/tx}
    \left(1+\frac{1}{tx+t-1}\right)^t\leq1+\frac{1}{x},\qquad(x\leq-1,\ x>0),
\end{equation}
\begin{equation}\label{1+x/t}
    \left(1+\frac{x}{t}\right)^t\leq1+\frac{2tx}{(1-t)x+2t},\qquad(0\leq x\leq2).
\end{equation}
Finally, for $0\leq a\leq1$
\begin{equation}\label{(1+1/x)a 1}
    \left(1+\frac{1}{x}\right)^{a}\geq1 + \frac{a}{x+1-a},\qquad(x\leq-1,\ x>0),
\end{equation}
where the equality holds only if $a=0,\ 1$ or $x=-1$, and
\begin{equation}\label{(1+1/x)a 2}
    \left(1+\frac{1}{x}\right)^{a}\geq1 + \frac{a}{x+\frac{1-a}{2}},\qquad(x>0),
\end{equation}
\begin{equation}\label{(1+1/x)a 3}
    \left(1+\frac{1}{x}\right)^{a}\leq1 + \frac{a}{x+\frac{1-a}{2}},\qquad(x\leq-1),
\end{equation}
where it becomes equality only if $a=0,\ 1$.\\
\end{lem}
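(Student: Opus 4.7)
The plan is to prove the five inequalities in turn, separating them into two groups by technique. For \eqref{1+1/tx} and \eqref{1+x/t}, both of which have the exponent $t\ge 1$ as the natural monotonicity parameter, I fix the other variable and differentiate the log-form of the claimed inequality with respect to $t$, noting that both sides agree at $t=1$.

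For \eqref{1+1/tx}, set $y=t(x+1)-1$; then $1+1/y=t(x+1)/y$, and a short calculation shows that $\phi(t):=t\log(1+1/y)$ satisfies $\phi'(t)=\log(1+1/y)-1/y$. This is $\le 0$ by \eqref{log(1+x)1}, since $y>0$ when $x>0$ and $y\le -1$ when $x\le -1$. Hence $\phi(t)\le \phi(1)=\log(1+1/x)$ for $t\ge 1$. For \eqref{1+x/t}, after the substitution $u=x/t$ (so $0\le u\le 2/t$) the claim reads $(1+u)^t\le \frac{2+(t+1)u}{2-(t-1)u}$. Letting $G(t)$ be the log of the LHS minus the log of the RHS, a direct computation gives
\[
G'(t)=\log(1+u)-\frac{2u(2+u)}{(2+(t+1)u)(2-(t-1)u)} .
\]
Expanding the denominator as $4+4u-(t^2-1)u^2\le 4(1+u)$ for $t\ge 1$ and invoking \eqref{log(1+x)3} yields $G'(t)\le 0$, and since $G(1)=0$ the inequality follows. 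Positivity of the factor $2-(t-1)u$ uses precisely the hypothesis $x\le 2$.

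For the three inequalities \eqref{(1+1/x)a 1}--\eqref{(1+1/x)a 3} I substitute $u=1/x$, which sends $x>0$ to $u>0$ and $x\le -1$ to $-1\le u<0$. After clearing denominators, \eqref{(1+1/x)a 1} becomes $(1+u)^a(1+(1-a)u)\ge 1+u$, which on dividing by $1+u$ and setting $b=1-a\in[0,1]$ is equivalent to the classical Bernoulli inequality $(1+u)^b\le 1+bu$, valid for $u>-1$; the borderline $u=-1$ (i.e.\ $x=-1$) gives $0=0$ already at the pre-divided stage, explaining the extra equality case $x=-1$ noted in the statement. The pair \eqref{(1+1/x)a 2} and \eqref{(1+1/x)a 3} reduces, via the same substitution, to the two-sided Padé-type comparison
\[
(1+u)^a \ \text{vs.}\ \frac{2+(1+a)u}{2+(1-a)u},\qquad 0\le a\le 1,
\]
with $\ge$ when $u>0$ and $\le$ when $-1<u<0$. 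Let $F(a)$ denote the difference of the two logarithms; then $F(0)=F(1)=0$, and a direct calculation gives
\[
F''(a)=\frac{u^2}{(2+(1+a)u)^2}-\frac{u^2}{(2+(1-a)u)^2},
\]
which is negative for $u>0$ (so $F$ is concave and $F\ge 0$) and positive for $-1<u<0$ (so $F$ is convex and $F\le 0$). All four denominator factors are positive on the allowed range because $u>-1$ and $a\in[0,1]$.

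I expect the main difficulty to be bookkeeping rather than analysis: tracking the two subregimes of $x$ in each inequality, confirming that every denominator stays positive (especially $2-(t-1)u$ and $2+(1-a)u$ at their worst endpoints), and matching the equality cases of the underlying reductions (Bernoulli and the concavity/convexity of $F$) with those asserted in the lemma. No new tools are needed beyond the elementary inequalities \eqref{log(1+x)1}--\eqref{log(1+x)4} and Bernoulli's inequality.
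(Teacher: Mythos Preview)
Your argument is correct throughout. For \eqref{1+1/tx} you do exactly what the paper does: differentiate in $t$ and invoke \eqref{log(1+x)1} (the paper works with $f(t)=(1+1/(tx+t-1))^t-(1+1/x)$ directly rather than its logarithm, but the computation is the same). The remaining three parts take genuinely different routes. For \eqref{1+x/t} the paper fixes $t$ and differentiates the ratio $g(x)=(1+x/t)^t\big/\big(1+\tfrac{2tx}{(1-t)x+2t}\big)$ in $x$, obtaining $g'(x)=(1-t^2)x^2(1+x/t)^t/((1+t)x+2t)^2\le 0$ by inspection, with no appeal to \eqref{log(1+x)3}; your approach (fix $u=x/t$, differentiate in $t$, then bound via \eqref{log(1+x)3}) is more parallel to the treatment of \eqref{1+1/tx} and works once one checks, as you do, that $2-(t-1)u>0$ on the relevant range. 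For \eqref{(1+1/x)a 1} the paper gets it in one line by substituting $t=1/a$ into the already-proved \eqref{1+1/tx}; your reduction to Bernoulli is independent and equally short. For \eqref{(1+1/x)a 2}--\eqref{(1+1/x)a 3} the paper merely says the proof is ``straightforward and similar, invoking \eqref{log(1+x)3} and \eqref{log(1+x)4}'', presumably differentiating in $x$; your concavity/convexity argument in $a$ is a clean self-contained alternative that sidesteps those log bounds entirely. Both strategies are elementary; the paper's is more economical in that \eqref{(1+1/x)a 1} drops out of \eqref{1+1/tx} for free, while yours has the virtue that each inequality stands on its own and the equality cases are read off directly from the strict concavity/convexity of $F$.
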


\begin{proof}
In order to prove (\ref{1+1/tx}), we let
$$
f(t)=\left(1+\frac{1}{tx+t-1}\right)^t-(1+\frac{1}{x}),\qquad (x\leq-1,\ x>0).
$$
Then its derivative has the form
$$
f'(t)=\left(1+\frac{1}{tx+t-1}\right)^t \left(\log\left(1+\frac{1}{tx+t-1}\right)-\frac{1}{tx+t-1}\right).
$$
Calling inequality (\ref{log(1+x)1}), it is easily seen that $f'(t)<0$. Therefore $f(t)$ is decreasing and $f(1)=0$. Hence $f(t)<0$ for $t>1$. To verify (\ref{1+x/t}), observe that conditions $t\geq1$ and $0\leq x\leq2$ imply the positiveness of both sides of the inequality, which is equivalent to
$$
\left(1+\frac{x}{t}\right)^t\left(1+\frac{2tx}{(1-t)x+2t}\right)^{-1}\leq1,\qquad (0\leq x\leq2,\ \ t\geq1).
$$
Hence, denoting the left-hand side of the latter inequality by $g(x)$, we obtain
$$
g'(x)=\frac{\left(1+\frac{x}{t}\right)^t}{((1+t)x+2t)^2}(1-t^2)x^2\leq0,\qquad t\geq1.
$$
Since $g'(x)\leq0$, then $g(x)\leq g(0)=1$ for $x\geq0$. The equality in (\ref{1+x/t}) holds for $x=0$ or $t=1$. To prove (\ref{(1+1/x)a 1}), we replace $t=1/a$ in (\ref{1+1/tx}). The proof of (\ref{(1+1/x)a 2}) and (\ref{(1+1/x)a 3}) is straightforward and similar, invoking  with inequalities (\ref{log(1+x)3}) and (\ref{log(1+x)4}).
\end{proof}

\begin{lem} Let $0<\sigma <1/2$, $t\in\mathbb{R}$ and $x\geq(1+\sqrt{3})/4$. Then\\
$$
\hspace{-64mm}\frac{(2x+1-\sigma)^2+t^2}{(2x+\sigma)^2+t^2}<{\Bigg\{}\left(\frac{2x+1}{2x}\right)^2
$$
\begin{eqnarray}\label{2x+1-s}
\hspace{36mm}\times\left(1-\frac{(1+4x)((-1+\sigma)\sigma+t^2)}{(1+2x)^2((-1+\sigma)\sigma+t^2+4x^2)}\right){\Bigg\}}^{1-2\sigma}.
\end{eqnarray}
If $t\geq1/2$, it has
\begin{equation}\label{1-s/s}
    \frac{(1-\sigma)^2+t^2}{\sigma^2+t^2}<\left(1+\frac{1}{(-1+\sigma)\sigma+t^2}\right)^{1-2\sigma}.
\end{equation}
Finally, for $t\geq12$, the following inequality holds
\begin{equation}\label{1-s/s prod}
\left(\frac{(1-\sigma)^2+t^2}{\sigma^2+t^2}\right)\prod_{n=1}^3\frac{(2n+1-\sigma)^2+t^2}{(2n+\sigma)^2+t^2}<\left(\frac14\prod_{n=1}^3\left(\frac{2n+1}{2n}\right)^2\right)^{1-2\sigma}.
\end{equation}
\end{lem}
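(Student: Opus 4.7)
The plan is to derive all three inequalities from the previous lemma, in particular from (\ref{(1+1/x)a 2}) applied with $a=1-2\sigma\in(0,1)$, which has the convenient feature that $(1-a)/2=\sigma$. For (\ref{1-s/s}) I would first observe the identity
\[\frac{(1-\sigma)^2+t^2}{\sigma^2+t^2}=1+\frac{1-2\sigma}{\sigma^2+t^2}.\]
The hypotheses $t\geq 1/2$ and $0<\sigma<1/2$ make $\sigma^2-\sigma+t^2$ positive, so (\ref{(1+1/x)a 2}) with $a=1-2\sigma$ gives
\[\left(1+\frac{1}{\sigma^2-\sigma+t^2}\right)^{1-2\sigma}\geq 1+\frac{1-2\sigma}{\sigma^2-\sigma+t^2+\sigma}=1+\frac{1-2\sigma}{\sigma^2+t^2},\]
which is strict since $a\in(0,1)$.

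For (\ref{2x+1-s}), the identity $(2x+1-\sigma)^2-(2x+\sigma)^2=(1+4x)(1-2\sigma)$ rewrites its left-hand side as $1+(1+4x)(1-2\sigma)/((2x+\sigma)^2+t^2)$. Writing $P:=\sigma^2-\sigma+t^2$, a short algebraic manipulation collapses the curly-brace expression on the right to $(P+(2x+1)^2)/(P+4x^2)=1+(4x+1)/(P+4x^2)$. The assumption $x\geq(1+\sqrt{3})/4$ (which in particular ensures $4x^2>1/4\geq\sigma(1-\sigma)\geq-P$) makes $u:=(P+4x^2)/(4x+1)$ positive, so (\ref{(1+1/x)a 2}) applies with $u$ in place of the variable and $a=1-2\sigma$. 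The identity $u+\sigma=((2x+\sigma)^2+t^2)/(4x+1)$ then shows that the resulting lower bound coincides exactly with the rewritten left-hand side, with strict inequality since $0<a<1$.

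The plan for (\ref{1-s/s prod}) is then to combine these: apply (\ref{1-s/s}) to the leading quotient and (\ref{2x+1-s}) with $x=n$, $n=1,2,3$, to the remaining factors (legitimate because $t\geq 12$ forces $P\geq 143.75>0$). Multiplying these strict positive inequalities gives
\[\text{LHS of (\ref{1-s/s prod})}<\left(\frac{(P+1)(P+9)(P+25)(P+49)}{P(P+4)(P+16)(P+36)}\right)^{1-2\sigma},\]
so, because $1-2\sigma>0$, it remains to check
\[\frac{(P+1)(P+9)(P+25)(P+49)}{P(P+4)(P+16)(P+36)}\leq\frac{11025}{9216}=\frac{1}{4}\prod_{n=1}^{3}\left(\frac{2n+1}{2n}\right)^{2}\]
for $P\geq 143.75$.

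The main obstacle is this last step. The cleanest route is to show the rational function on the left is strictly decreasing on $(0,\infty)$, since
\[\frac{d}{dP}\log\prod_{k=0}^{3}\frac{P+(2k+1)^2}{P+(2k)^2}=\sum_{k=0}^{3}\left[\frac{1}{P+(2k+1)^2}-\frac{1}{P+(2k)^2}\right]<0,\]
which reduces the verification to the single value $P=143.75$ (attained as $t=12$, $\sigma\to 1/2^{-}$), a routine numerical check on a degree-four polynomial.
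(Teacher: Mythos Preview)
Your argument is correct. The route differs from the paper's in two places worth noting. For (\ref{2x+1-s}) and (\ref{1-s/s}) the paper substitutes $1-2\sigma=1/y$ and invokes inequality (\ref{1+x/t}) (the upper bound on $(1+x/t)^t$ for $0\le x\le 2$, $t\ge 1$); you instead apply (\ref{(1+1/x)a 2}) directly with $a=1-2\sigma$, exploiting the pleasant coincidence $(1-a)/2=\sigma$. Both hit the same target, but yours is a bit more direct and in fact only needs $P+4x^2>0$ (i.e.\ $x>1/4$), whereas the paper's use of (\ref{1+x/t}) is exactly what forces the hypothesis $x\ge(1+\sqrt{3})/4$ via the constraint (\ref{0<1+4x<2}). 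For (\ref{1-s/s prod}) the two proofs converge: after simplification the paper's condition
\[
\left(1+\frac1P\right)\prod_{n=1}^3\left(1-\frac{(1+4n)P}{(1+2n)^2(P+4n^2)}\right)<\frac14
\]
is algebraically identical to your rational inequality in $P$, and the paper's ``increasing in $\sigma$, decreasing in $t$'' claim is precisely your ``decreasing in $P$'' (since $P=\sigma^2-\sigma+t^2$). Your explicit log-derivative computation supplies the justification the paper leaves implicit, and both proofs end with the same numerical check at $\sigma=1/2$, $t=12$ (i.e.\ $P=143.75$).
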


\begin{proof}
Let $1-2\sigma=1/y$. Then (\ref{2x+1-s}) is equivalent to
\begin{equation}\label{2x+1-s   y}
    \left(1+\frac{4(1+4x)}{y((-1/y+1+4x)^2+4t^2)}\right)^y<1+\frac{4(1+4x)y^2}{1+(-1+4t^2+16x^2)y^2}.
\end{equation}
It is not difficult to verify
\begin{equation}\label{0<1+4x<2}
    0<\frac{4(1+4x)}{(-1/y+1+4x)^2+4t^2}\leq2,\qquad (x\geq\frac{1+\sqrt{3}}{4},\ t\in\mathbb{R}).
\end{equation}
But relation (\ref{2x+1-s   y}) is just inequality (\ref{1+x/t}) where
$$
x:=\frac{4(1+4x)}{(-1/y+1+4x)^2+4t^2},\qquad t:=y.
$$
So we proved (\ref{2x+1-s}). In the same manner we establish (\ref{1-s/s}). To prove (\ref{1-s/s prod}) it is enough to verify the following inequality
$$
\left(1 +\frac{1}{(-1+\sigma)\sigma+t^2}\right)\prod_{n=1}^3\left(1-\frac{(1+4n)((-1+\sigma)\sigma+t^2)}{(1+2n)^2((-1+\sigma)\sigma+t^2+4n^2)}\right)<\frac14.
$$
Its left-hand side is increasing by $\sigma$ and decreasing by $t$ in the strip $]0,1/2[\times]1/2,\infty[$. Therefore, we may put $\sigma=1/2$ and $t=12$ and see by straightforward computation that it is less than $1/4$.
\end{proof}

\section{Proof of the main result}
\begin{proof}[Proof of Theorem \ref{theorem 1} ]
As it is known, the functional equation for the Riemann zeta-function (\cite{Titch}, p. 16) can be written as
$$
\pi^{-\frac12s}\Gamma\left(\frac12s\right)\zeta(s)=\pi^{-\frac12+\frac12s}\Gamma\left(\frac12-\frac12s\right)\zeta(1-s),
$$
or
$$
\zeta(1-s)=\pi^{\frac12-s}\frac{\Gamma(\frac12s)}{\Gamma(\frac12-\frac12s)}\zeta(s).
$$
Denoting by
$$
g(s)=\pi^{\frac12-s}\frac{\Gamma(\frac12s)}{\Gamma(\frac12-\frac12s)}
$$
we will show that for $0<\sigma<\frac12$ and $t\geq12$, $\ |g(\sigma+it)|<1$.

Taking the infinite product for the sine function (\cite{Ahlf}, p. 197)
$$
\sin \pi z=\pi z\prod_{n=1}^\infty(1-\frac{z^2}{n^2}),\qquad z\in\mathbb{C},
$$
and letting $z=\frac12$, we arrive at the known Wallis's formula
$$
\frac{\pi}{2}=\prod_{n=1}^\infty\frac{(2n)^2}{(2n-1)(2n+1)}.
$$
Moreover, the Gauss infinite product formula for the gamma function (\cite{Ref}, p. 61)
$$
\Gamma(z)=\frac1z\prod_{n=1}^\infty\displaystyle{\frac{\left(1+\frac1n\right)^z}{1+\frac{z}{n}}},
$$
yields
$$
\frac{\Gamma(\frac12s)}{\Gamma(\frac12-\frac12s)}=\frac{1-s}{s}\prod_{n=1}^\infty\left(\frac{1}{1+\frac1n}\right)^{\frac12-s}\left(\frac{1+\frac{1-s}{2n}}{1+\frac{s}{2n}}\right).
$$
Hence
\begin{eqnarray*}
  g(s) &=& \left(\frac{1-s}{s}\right)2^{\frac12-s}\prod_{n=1}^\infty\left(\frac{(2n)^2}{(2n-1)(2n+1)}\right)^{\frac12-s}\prod_{n=1}^\infty\left(\frac{1}{1+\frac1n}\right)^{\frac12-s}\left(\frac{1+\frac{1-s}{2n}}{1+\frac{s}{2n}}\right)\\
   &=& \left(\frac{1-s}{s}\right)2^{\frac12-s}\prod_{n=1}^\infty\left(\frac{(2n)^2n}{(2n-1)(2n+1)(n+1)}\right)^{\frac12-s}\prod_{n=1}^\infty\frac{1+\frac{1-s}{2n}}{1+\frac{s}{2n}}\\
   &=&\left(\frac{1-s}{s}\right)2^{\frac12-s}\prod_{n=1}^\infty\left(\frac{(2n)n}{(2n-1)(n+1)}\right)^{\frac12-s}\prod_{n=1}^\infty\left(\frac{2n}{2n+1}\right)^{\frac12-s}\left(\frac{1+\frac{1-s}{2n}}{1+\frac{s}{2n}}\right)\\
   &=&\left(\frac{1-s}{s}\right)2^{\frac12-s}\prod_{n=1}^\infty\left(\frac{(2n)n}{(2n-1)(n+1)}\right)^{\frac12-s}\prod_{n=1}^\infty\left(\frac{2n}{2n+1}\right)^{\frac12-s}\left(\frac{2n+1-s}{2n+s}\right)\\
   &=&\left(\frac{1-s}{s}\right)2^{\frac12-s}\prod_{n=1}^\infty\left(\frac{(2n+1)n}{(2n-1)(n+1)}\right)^{\frac12-s}\prod_{n=1}^\infty\left(\frac{2n}{2n+1}\right)^{1-2s}\left(\frac{2n+1-s}{2n+s}\right).
\end{eqnarray*}
We put
$$
f(s)=2^{\frac12-s}\prod_{n=1}^\infty\left(\frac{(2n+1)n}{(2n-1)(n+1)}\right)^{\frac12-s},
$$
and
$$
h(s)=h_1(s)h_2(s)
$$
where
$$
h_1(s)=\frac{1-s}{s},\qquad h_2(s)=\prod_{n=1}^\infty\left(\frac{2n}{2n+1}\right)^{1-2s}\frac{2n+1-s}{2n+s}.
$$
Plainly, for any $N$ we have
$$
\prod_{n=1}^N\frac{(2n+1)n}{(2n-1)(n+1)}=\frac{2N+1}{N+1}<2,
$$
and so
$$
\prod_{n=1}^\infty\frac{(2n+1)n}{(2n-1)(n+1)}=2.
$$
Hence
$$
|f(s)|=2^{1-2\sigma}.
$$
Therefore, it is sufficient to show that for $0<\sigma<\frac12$ and $t\geq12$
\begin{equation}\label{h(s)}
    |h(s)|<2^{2\sigma-1}.
\end{equation}

Indeed, $|h_1(s)|$ is a decreasing function with respect to $\sigma$ and $t$ for  $0<\sigma<1/2$ and $t>0$. Meanwhile
\begin{equation}\label{h2(s)}
    |h_2(s)|=\prod_{n=1}^\infty\left(\frac{2n}{2n+1}\right)^{1-2\sigma}\left|\frac{2n+1-s}{2n+s}\right|,
\end{equation}
is increasing with respect to $\sigma$ in the strip $(\sigma,t)\in ]0,1/2[\times [1/2,\infty[$, and decreasing with respect to $t$ in the strip $(\sigma,t)\in ]0,1/2[\times \mathbb{R}^+$.\\
Denoting by
$$
h_{2,n}(\sigma,t)=\left(\frac{2n}{2n+1}\right)^{1-2\sigma}\left|\frac{2n+1-(\sigma+it)}{2n+(\sigma+it)}\right|
$$
the general term of the product and assuming for now
\begin{equation}\label{h(2.n)}
    h_{2,n}(\sigma,t)<1,\qquad (0<\sigma<\frac12,\ t\geq0),
\end{equation}
we easily come out with the inequality
$$
\prod_{n=1}^{N+1} h_{2,n}(\sigma,t)<\prod_{n=1}^{N} h_{2,n}(\sigma,t),\qquad (0<\sigma<\frac12,\ t\geq0).
$$

To verify (\ref{h(2.n)}) we need to show that
\begin{equation}\label{1+1/2n}
    (1+\frac{1}{2n})^{1-2\sigma}>\sqrt{\frac{(2n+1-\sigma)^2+t^2}{(2n+\sigma)^2+t^2}},\qquad t\geq0.
\end{equation}
In fact,
\begin{equation}\label{extension t}
    \frac{(2n+1-\sigma)^2+t^2}{(2n+\sigma)^2+t^2}=1+\frac{(1-2\sigma)(4n+1)}{(2n+\sigma)^2+t^2}.
\end{equation}
Hence inequality (\ref{1+1/2n}) yields
\begin{equation}\label{1+1/2n 2}
    (1+\frac{1}{2n})^{1-2\sigma}>\frac{2n+1-\sigma}{2n+\sigma}\geq\sqrt{\frac{(2n+1-\sigma)^2+t^2}{(2n+\sigma)^2+t^2}}.
\end{equation}
However
$$
\frac{2n+1-\sigma}{2n+\sigma}=1+\frac{1-2\sigma}{2n+\sigma}.
$$
So the first inequality in (\ref{1+1/2n 2}) follows immediately from (\ref{(1+1/x)a 1}), letting $x=2n$ and $a=1-2\sigma$. Thus we have inequality (\ref{h(2.n)}).

Further, we show that $\{h_{2,n}(\sigma,t)\}_{n=1}^\infty$ is an increasing sequence for any $(\sigma,t)\in]0,1/2[\times\mathbb{R}$. To do this we consider the function $H_2(y)=h_{2,y}(\sigma,t)$ and differentiate it with respect to $y$. Hence by straightforward calculations we derive
$$
\hspace{-45mm}H_2'(y)=\frac{\displaystyle{\frac{1-2\sigma}{y(2y+1)}\left(\frac{2y}{2y+1}\right)^{1-2\sigma}}}{((2y+\sigma)^2+t^2)^2\ \sqrt{\displaystyle{\frac{(2y+1-\sigma)^2+t^2}{(2y+\sigma)^2+t^2}}}}
$$
$$
\hspace{-40mm}\times{\Big\{}(2y+1-\sigma)(1-\sigma)\sigma(2y+\sigma)
$$
$$
\hspace{-20mm}+(1+6y(1+2y)-2(1-\sigma)\sigma)t^2+t^4{\Big\}}.
$$
Since
$$
\hspace{-15mm}(2y+1-\sigma)(1-\sigma)\sigma(2y+\sigma)+(1+6y(1+2y)-2(1-\sigma)\sigma)t^2+t^4
$$
$$
\hspace{45mm}\geq(2y+1-\sigma)(1-\sigma)\sigma(2y+\sigma)>0,
$$
we get that the derivative is positive, and therefore $H_2(y)$ is increasing for $y>0$. Now fixing $t\geq1/2$ we justify that $h_{2,n}(\sigma,t)$ is increasing by $\sigma$. Precisely,
$$
\hspace{-35mm}\frac{\partial}{\partial\sigma}h_{2,n}(\sigma,t) = \left(\frac{2n}{2n+1}\right)^{1-2\sigma}/\left|\frac{2n+1-(\sigma+it)}{2n+(\sigma+it)}\right|
$$
$$
\hspace{-9mm}\times{\Big\{}-(1+4n)(4n^2+2n+\sigma-\sigma^2+t^2)
$$
$$
\hspace{23mm}+2((2n+1-\sigma)^2+t^2)((2n+\sigma)^2+t^2)\log(1+\frac{1}{2n}){\Big\}}
$$
and we achieve the goal showing that the latter multiplier is positive. But this is true due to inequality (\ref{log(1+x)2}), because it is greater than
$$
\frac{-(1-2\sigma)^2(2n+1-\sigma)(2n+\sigma)+(8n(1+2n)+3-8(1-\sigma)\sigma)t^2+4t^4}{1+4n} \\
$$
$$
\geq\frac{1+(1-\sigma)\sigma(8n(1+2n)-3+4(1-\sigma)\sigma)}{1+4n}>0,\qquad(0<\sigma<1/2,\ t\geq1/2).
$$
Returning to  (\ref{h2(s)}) we conclude that $|h_2(\sigma,t)|$ is increasing with respect to $\sigma$ for $0<\sigma<\frac12$ and $t\geq1/2$, and by (\ref{extension t}) it is decreasing with respect to $t$ for $0<\sigma<\frac12$ and $t>0$.\\

Since
\begin{equation}\label{hN(s)}
    |h_N(s)|=|\frac{1-s}{s}|\prod_{n=1}^N\left(\frac{2n}{2n+1}\right)^{1-2\sigma}\left|\frac{2n+1-s}{2n+s}\right|
\end{equation}
is decreasing by $N$ we have
$$
|h(s)|\leq|h_N(s)|.
$$
As $|h_N(s)|$ is decreasing by $t$, it is enough to show that
$$
|h_N(s)|<2^{2\sigma-1},\qquad\mbox{for}\qquad(t=12,\ N=3)
$$
and this has been established in (\ref{1-s/s prod}). Moreover, since $\zeta(s)$ is reflexive with respect to the real axis, i.e., $\zeta(\overline{s})=\overline{\zeta(s)}$, inequality (\ref{main}) holds also for $t\leq-12$. Theorem \ref{theorem 1} is proved.
\end{proof}
\begin{rem}
A computer simulation shows that the main result is still valid for $t\in ]6.5, 12[$ (See Figure \ref{b}). However, a direct proof by this approach is more complicated, because to achieve the goal we should increase a number $N$ of terms in the product (\ref{hN(s)}).
\end{rem}
\begin{figure}[hb]
  \centering
  \includegraphics[scale=.20]{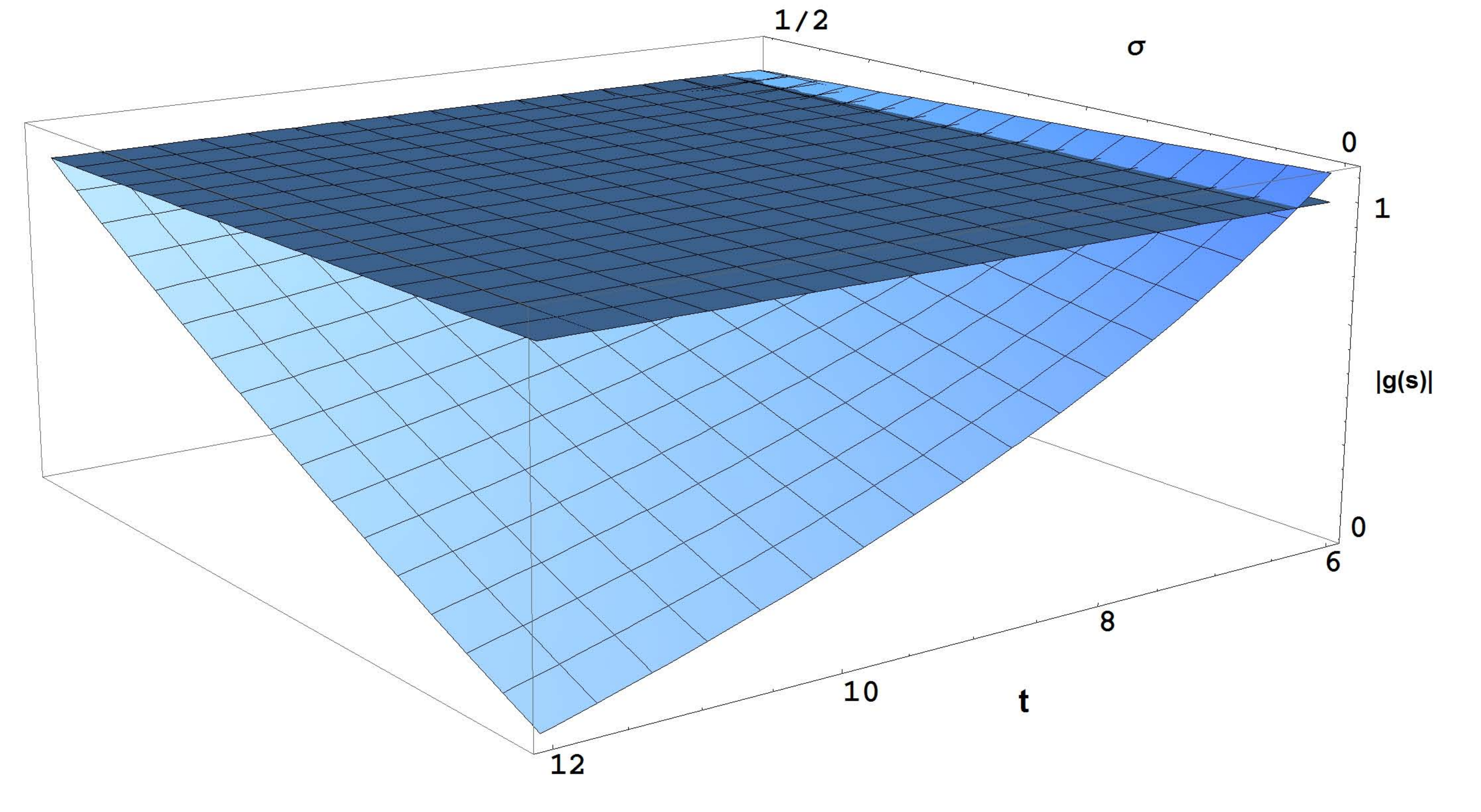}\\
  \caption{The graph of $|g(s)|$ for $6<t<12$}\label{b}
\end{figure}
\section{Conclusion and some result}
Here as in \cite{spira} one can announce the following proposition.
\begin{prop}
The Riemann hypothesis is true if and only if
$$
|\zeta(1-s)|<|\zeta(s)|,\qquad \mbox{for}\quad (0<\sigma<\frac12,\ |t|>6.5).
$$
\end{prop}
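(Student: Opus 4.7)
The plan is to handle the two implications separately. The reverse implication (strict inequality $\Rightarrow$ RH) follows quickly from the functional equation combined with the numerical fact that the lowest non-trivial zero of $\zeta$ has imaginary part larger than $6.5$; the forward implication (RH $\Rightarrow$ strict inequality) combines Theorem \ref{theorem 1} for $|t|\geq 12$ with an extension of the same argument down to $|t|>6.5$.

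For \emph{strict inequality} $\Rightarrow$ RH, I would argue by contraposition. Assume that $\zeta$ has a zero $\rho=\beta+i\gamma$ in the critical strip with $\beta\neq 1/2$. The functional equation written in the form $\zeta(1-s)=g(s)\zeta(s)$, with $g(s)=\pi^{1/2-s}\Gamma(s/2)/\Gamma(1/2-s/2)$ holomorphic and non-vanishing on $0<\sigma<1$, forces $\zeta(1-\rho)=0$ as well. Since every non-trivial zero is known to satisfy $|\gamma|\geq 14.13\ldots>6.5$, the point $s_{0}\in\{\rho,\,1-\rho\}$ whose real part lies in $(0,1/2)$ belongs to the region where the proposition asserts strict inequality; yet at this $s_{0}$ both $|\zeta(s_{0})|$ and $|\zeta(1-s_{0})|$ vanish, contradicting the hypothesis.

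For RH $\Rightarrow$ \emph{strict inequality}, Theorem \ref{theorem 1} already supplies the non-strict inequality (\ref{main}) on $|t|\geq 12$, and its statement says that equality occurs only at zeros of $\zeta$; under RH there are no such zeros in $0<\sigma<1/2$, so the inequality is strict. It remains to cover the band $6.5<|t|<12$. I would mimic the proof of Theorem \ref{theorem 1} but use more than three factors in the partial product $h_{N}(s)$ of (\ref{hN(s)}): establish the analogue of (\ref{1-s/s prod}) with the integer $3$ replaced by a sufficiently large $N$, exploiting the monotonicity of $|h_{N}(\sigma,t)|$ in $\sigma$ (increasing) and in $t$ (decreasing) already verified inside the proof of Theorem \ref{theorem 1}. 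Covering $(6.5,12)$ by finitely many subintervals and choosing an appropriate $N$ on each would then close the gap.

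The hard part is precisely this quantitative extension: as $t$ decreases toward $6.5$, the required $N$ grows and the margin in the inequality replacing (\ref{1-s/s prod}) shrinks, so one must either push through a finite but tedious case analysis or replace the hand-tailored estimates of Lemma 2 by sharper ones. This is exactly why the authors defer the strip $(6.5,12)$ to computer simulation in the remark and write ``one can announce'' rather than supply a detailed proof.
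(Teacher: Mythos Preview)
The paper does not actually prove this proposition: it only ``announces'' it with a citation to Spira \cite{spira}. So there is no paper-proof to compare against line by line, and your closing remark about the authors' phrasing is on the mark.

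Your logical skeleton is correct. For the reverse implication, a zero $\rho$ off the critical line together with its reflection $1-\rho$ yields a zero in $0<\sigma<1/2$ with $|t|>14>6.5$, at which both sides vanish---contradicting strict inequality. For the forward implication on $|t|\ge 12$, the proof of Theorem~\ref{theorem 1} in fact establishes the strict bound $|g(s)|<1$, so $|\zeta(1-s)|<|\zeta(s)|$ whenever $\zeta(s)\neq 0$; RH supplies exactly that non-vanishing.

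The genuine gap is the band $6.5<|t|<12$, which you flag but do not close. Your plan to enlarge $N$ in $h_{N}$ is precisely what the Remark after Theorem~\ref{theorem 1} says would be needed, but neither you nor the authors carry it out. What the citation to \cite{spira} is really doing is pointing to a \emph{different} route: Spira (and Dixon--Schoenfeld \cite{Dixon}) obtain $|g(\sigma+it)|<1$ on $0<\sigma<1/2$, $|t|>6.5$ directly from Stirling's formula, bypassing the infinite-product argument of this paper entirely. With that external input both implications follow exactly as you outline. One small caveat on your extension sketch: the paper never shows that $|h_{N}(\sigma,t)|$ is increasing in $\sigma$---only that the factor $|h_{2}|$ is, while $|h_{1}|=|(1-s)/s|$ is decreasing in $\sigma$; the monotonicity actually used to finish (\ref{1-s/s prod}) is for a reformulated auxiliary expression, so pushing the method to smaller $|t|$ would require repeating that detour rather than invoking monotonicity of $|h_{N}|$ itself.
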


As it is known \cite{Duen},  zeros of the derivative $\zeta'(s)$ of Riemann's zeta-function are connected with the behavior of zeros of $\zeta(s)$ itself. Indeed, Speiser's theorem \cite{speiser} states that the Riemann hypothesis (RH) is equivalent to $\zeta'(s)$ having no zeros on the left of the critical line. Thus, we can get further tools to study RH, employing these properties.

Finally, we will formulate a sufficient condition for the Riemann hypothesis to be true.
\begin{prop}
If
$$
\qquad\frac{\partial}{\partial\sigma}|\zeta(s)|^2<0,\qquad \mbox{for}\quad (0<\sigma<\frac12,\ |t|>6.5),\eqno(A)
$$
then the Riemann hypothesis is true.
\end{prop}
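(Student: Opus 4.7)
The approach is to derive a direct contradiction between hypothesis (A) and the existence of any nontrivial zero of $\zeta$ off the critical line. The starting point is the elementary identity
\begin{equation*}
\frac{\partial}{\partial\sigma}|\zeta(s)|^2 \;=\; 2\,\Re\bigl(\zeta'(s)\,\overline{\zeta(s)}\bigr),
\end{equation*}
obtained by writing $|\zeta(s)|^2=\zeta(s)\,\zeta(\overline{s})$ and differentiating in $\sigma$, using $\zeta(\overline{s})=\overline{\zeta(s)}$ and $\zeta'(\overline{s})=\overline{\zeta'(s)}$. In particular, $\partial_\sigma|\zeta(s)|^2$ automatically vanishes at every zero of $\zeta$.

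Assume for contradiction that the Riemann Hypothesis fails. Then there is a nontrivial zero $\rho=\beta+i\gamma$ with $\beta\neq 1/2$. The set of nontrivial zeros is invariant under $s\mapsto 1-s$ (functional equation) and $s\mapsto\overline{s}$ (reflection), so replacing $\rho$ by $1-\rho$ if necessary, I may assume $0<\beta<1/2$. It is a classical numerical fact that the lowest nontrivial zero lies at height $|\gamma|\approx 14.1347$, so in particular $|\gamma|>6.5$; this is precisely the threshold appearing in Proposition 1 and in the Remark following Theorem \ref{theorem 1}. Hence $\rho$ lies in the open region $\{0<\sigma<1/2,\ |t|>6.5\}$ where hypothesis (A) is assumed to hold.

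Evaluating the displayed identity at $s=\rho$ and using $\zeta(\rho)=0$ gives $\partial_\sigma|\zeta(s)|^2\big|_{s=\rho}=0$, directly contradicting the strict inequality in (A). Therefore no nontrivial zero can lie off the critical line, and the RH follows.

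I expect no serious obstacle: morally, a zero of $\zeta$ forces $|\zeta|^2$ to attain its absolute minimum value $0$ at an interior point of the strip, so its $\sigma$-derivative must vanish there, in direct conflict with (A). The only external ingredient is the standard numerical bound on the imaginary part of the lowest nontrivial zero, which is exactly what the $|t|>6.5$ clause in the hypothesis is tailored for; an equivalent but slightly longer route would invoke strict monotonicity of $\sigma\mapsto|\zeta(\sigma+i\gamma)|^2$ on $(0,1/2)$ implied by (A) and compare values at $\sigma$ slightly greater than $\beta$, but the derivative argument is cleaner.
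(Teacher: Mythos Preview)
Your argument is correct, and it takes a genuinely different (and more elementary) route than the paper. The paper invokes Speiser's theorem: if RH fails, then $\zeta'$ has a zero $s_0$ with $0<\Re s_0<1/2$, and at that point $\partial_\sigma|\zeta(s)|^2=2\Re\bigl(\zeta'(s_0)\overline{\zeta(s_0)}\bigr)=0$ because the factor $\zeta'(s_0)$ vanishes. You instead use a hypothetical off-line zero $\rho$ of $\zeta$ itself and make the \emph{other} factor $\overline{\zeta(\rho)}$ vanish in the same identity. Your version avoids the nontrivial input of Speiser's theorem and, as you note, the condition $|t|>6.5$ is cleanly handled by the classical bound $|\gamma|>14$ on the height of any nontrivial zero; the paper's route must also tacitly ensure that the Speiser zero of $\zeta'$ lies at height $>6.5$, which is true but is an extra ingredient. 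The paper's approach, on the other hand, ties the sufficient condition (A) directly to the structural equivalence between RH and the zero-free region of $\zeta'$, which is thematically relevant to the conjectured necessity stated immediately afterward.
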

\begin{proof}
In fact, if the Riemann hypothesis were not true,  then by Speiser's theorem \cite{speiser}, there exists a number $s\in ]0,1/2[\times\mathbb{R}$, such that $\zeta'(s)=0$. Hence $\frac{\partial}{\partial\sigma}|\zeta(s)|^2=0$.\\
\end{proof}

We conclude this paper by the following
\begin{xconj}
The condition (A) is also necessary for the validity of the Riemann hypothesis.
\end{xconj}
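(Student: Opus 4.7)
The plan is to assume the Riemann hypothesis and to derive condition (A) analytically. The first step is the identity
\[
\frac{\partial}{\partial\sigma}|\zeta(s)|^{2}
\;=\; \zeta'(s)\overline{\zeta(s)}+\zeta(s)\overline{\zeta'(s)}
\;=\; 2\,|\zeta(s)|^{2}\,\Re\!\left(\frac{\zeta'(s)}{\zeta(s)}\right),
\]
which is legitimate throughout $0<\sigma<1/2$ because under RH we have $\zeta(s)\neq0$ on the left critical strip. Consequently, (A) becomes equivalent to $\Re(\zeta'/\zeta)<0$ on the region $0<\sigma<1/2,\;|t|>6.5$, and it is this analytic inequality I would try to establish.

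My main tool would be the partial fraction expansion of $\zeta'/\zeta$ coming from the Hadamard product for the completed zeta-function,
\[
\frac{\zeta'(s)}{\zeta(s)}
\;=\; B-\frac{1}{s}-\frac{1}{s-1}+\tfrac{1}{2}\log\pi-\tfrac{1}{2}\,\psi\!\left(\tfrac{s}{2}\right)+\sum_{\rho}\!\left(\frac{1}{s-\rho}+\frac{1}{\rho}\right),
\]
where $\psi$ is the digamma function, $B$ is a known real constant, and $\rho$ runs over the non-trivial zeros. Under RH every such zero has the form $\rho=\tfrac12+i\gamma$, so for $\sigma<1/2$ each summand satisfies
\[
\Re\!\frac{1}{s-\rho} \;=\; \frac{\sigma-\tfrac12}{(\sigma-\tfrac12)^{2}+(t-\gamma)^{2}}\;<\;0,
\]
and the whole sum over zeros is strictly negative. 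Stirling's asymptotics give $\Re\psi(s/2)\sim\log(|t|/2)$, so the gamma contribution $-\tfrac12\Re\psi(s/2)$ is also negative for $|t|>6.5$ and grows in absolute value like $\tfrac12\log|t|$. The remaining terms, namely the $O(1/|t|^{2})$ contributions from $-1/s$ and $-1/(s-1)$, the constants $B$ and $\tfrac12\log\pi$, and the convergent positive sum $\sum_{\rho}\Re(1/\rho)$, together contribute $O(1)$. A careful combination with explicit constants should then yield $\Re(\zeta'/\zeta)<0$ uniformly on the region.

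The hard part will be making every estimate explicit and uniform down to $|t|=6.5$. Asymptotically ($|t|\to\infty$) the gamma term alone overwhelms any positive $O(1)$ contribution, but for moderate $|t|$ the margin shrinks and the known numerical values of $B$ and of $\sum_{\rho}\Re(1/\rho)$, together with an explicit version of the Riemann--von Mangoldt zero-counting formula in the spirit of Backlund, must be used. Moreover, when $s$ approaches the critical line, individual terms $1/(s-\rho)$ may blow up, so cancellations among paired zeros $\rho,\bar\rho$ have to be retained carefully. For the strip $6.5<|t|\le T_{0}$ with some moderate $T_{0}$ I would follow the hybrid strategy used in Theorem \ref{theorem 1}, supplementing the analytic argument with a rigorous numerical verification on a grid, in the spirit of the computer simulation illustrated in Figure \ref{b}.

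An alternative route avoiding explicit formulae would exploit the functional equation directly. Writing $|\zeta(s)|^{2}=|\chi(s)|^{2}|\zeta(1-s)|^{2}$ with $\chi(s)=\pi^{s-1/2}\Gamma((1-s)/2)/\Gamma(s/2)$, one obtains
\[
\frac{\partial}{\partial\sigma}\log|\zeta(s)|^{2}
\;=\; 2\,\Re\!\frac{\chi'(s)}{\chi(s)} - 2\,\Re\!\frac{\zeta'(1-s)}{\zeta(1-s)},
\]
so (A) becomes equivalent to the lower bound $\Re(\zeta'(1-s)/\zeta(1-s))>\Re(\chi'(s)/\chi(s))$ at the symmetric point $1-s$ in the right critical strip. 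Since $\Re(\chi'/\chi)\sim\log(2\pi/|t|)<0$ for $|t|>2\pi$, one needs a sufficient lower estimate for $\Re(\zeta'/\zeta)$ on the right critical strip. Under RH the Hadamard-product argument now produces a positive zeros sum (since $\Re((1-s)-\rho)=\tfrac12-\sigma>0$), but establishing the desired inequality uniformly near zeros on the critical line and for $|t|$ only slightly above $6.5$ is, I expect, the deepest obstruction of the whole conjecture.
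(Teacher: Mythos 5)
First, a point of comparison: the paper gives no proof of this statement --- it is explicitly left as a conjecture --- so there is no argument of the authors' to measure yours against. What you have written is a programme for attacking what the paper regards as an open problem, and it should be judged as such.

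Your reduction is correct and is the natural one: under RH, $\zeta(s)\neq0$ for $0<\sigma<1/2$, so $\frac{\partial}{\partial\sigma}|\zeta(s)|^2=2|\zeta(s)|^2\Re(\zeta'/\zeta)(s)$ there, and (A) is equivalent to $\Re(\zeta'/\zeta)(s)<0$ on the region. The Hadamard partial-fraction expansion is the right tool, and your central observation --- that under RH each $\Re\frac{1}{s-\rho}=\frac{\sigma-1/2}{|s-\rho|^2}$ is strictly negative for $\sigma<1/2$, and remains negative after the symmetric pairing needed for convergence --- is sound; it is the same mechanism that drives Speiser's theorem. One simplification you overlook: $\Re B=-\sum_\rho\Re(1/\rho)$ \emph{exactly}, so these two ``$O(1)$'' contributions cancel identically and need not be estimated numerically; equivalently, one should work with $\xi'/\xi$, whose real part under RH is just $\sum_\rho(\sigma-\tfrac12)/|s-\rho|^2<0$.

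The genuine gap is quantitative, and your sketch mislocates the difficulty. After the cancellation above, what remains besides the negative zero sum is $\frac12\log\pi-\Re\frac{1}{s-1}-\frac12\Re\psi(\tfrac s2+1)\approx\frac{1-\sigma}{(1-\sigma)^2+t^2}+\frac12\log\frac{2\pi}{|t|}+\cdots$. The logarithm changes sign at $|t|=2\pi\approx6.283$, so at the threshold $|t|=6.5$ it contributes only about $-0.017$, while the pole term contributes up to about $+0.023$ (at small $\sigma$); a short computation gives that this ``trivial'' part is in fact \emph{positive} (roughly $+0.002$) at $\sigma=0$, $t=6.5$. Hence your assertion that the gamma factor ``overwhelms any positive $O(1)$ contribution'' is true only for large $|t|$; near $|t|=6.5$ the inequality survives only because the zero sum supplies an explicit negative quantity (the first pair $\gamma_1\approx14.13$ already gives about $-0.0097$ there), and this must be verified uniformly in $\sigma\in\,]0,1/2[$, including the degenerate limit $\sigma\to\tfrac12^-$ where the zero sum vanishes and the trivial part alone must be shown nonpositive. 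None of this is carried out --- your own text defers it to ``a careful combination with explicit constants'' plus numerics --- so what you have is a credible plan that would certainly yield the result for all sufficiently large $|t|$, but not a proof on the stated range $|t|>6.5$. The second route via $\chi'/\chi$ faces exactly the same quantitative issue at the symmetric point and does not circumvent it.
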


\paragraph{ACKNOWLEDGMENTS.} The work of the first author is supported by the Calouste Gulbenkian Foundation, under Ph.D. grant number CB/C02/2009/32. Research partially funded by the European Regional Development Fund through the programme COMPETE and by the Portuguese Government through the FCT
 under the project PEst-C/MAT/UI0144/2011.

\bibliographystyle{plain}

\end{document}